\documentclass[a4paper,12pt,reqno]{amsart}

 \usepackage[T2A]{fontenc}
 \usepackage[utf8]{inputenc}
 \usepackage[ukrainian,english]{babel}
 \usepackage{amssymb,upref}
\usepackage{amsthm}

\usepackage{graphicx}
\usepackage{mathrsfs}

\usepackage[text={130mm,200mm}]{geometry}

\theoremstyle{plain}
\newtheorem{theorem}{Theorem}
\newtheorem*{theorem*}{Theorem}

\newtheorem*{corollary*}{Corollary}

\newtheorem*{lemma*}{Lemma}

\newtheorem*{proposition*}{Proposition}

\newtheorem*{conjecture*}{Conjecture}
\theoremstyle{definition}

\newtheorem*{definition*}{Definition}
\theoremstyle{remark}

\newtheorem*{remark*}{Remark}

\begin{document}

\title[Topological, metric and fractal properties of the set of real numbers]{Topological, metric and fractal properties of the set of real numbers with a given asymptotic mean of digits in their $4$--adic representation in the case when the digit frequencies exist.}

\author{M. V. Pratsiovytyi}
\address[M. V. Pratsiovytyi]{Institute of Mathematics of NAS of Ukraine,  Dragomanov Ukrainian State University, Kyiv, Ukraine\\
ORCID 0000-0001-6130-9413}
\email{prats4444@gmail.com}
\author{S. O. Klymchuk}
\address[S. O. Klymchuk]{Institute of Mathematics of NAS of Ukrain, Kyiv, Ukraine\\
ORCID 0009-0005-3979-4543}
\email{svetaklymchuk@imath.kiev.ua}

\subjclass{11K55, 26A27, 26A30}

\keywords{Asymptotic mean of digits, digit
frequency of number, $4$--adic representation of real numbers, sets of
Besicovitch--Eggleston type, Hausdorff--Besicovitch fractal
dimension, normal numbers, weakly normal numbers.}

\thanks{Scientific Journal of Drahomanov National Pedagogical University. Series 1. Physical and Mathematical Sciences. -- Kyiv: Drahomanov National Pedagogical University, 2013, No. 14, pp. 217--226.}

\begin{abstract}
In the paper we describe some properties of function
$$
y=r(x)=\lim_{n\to\infty}\frac{1}{n}\sum^{\infty}_{k=1}\alpha_k(x), \text{ where } x=\sum^{\infty}_{k=1}\alpha_k(x)4^{-k}
$$
 of $4-$adic digits asymptotic mean of fractional part of real number $x$, particularly properties of it's level sets
$
S_{\theta}=\left\{x: r(x)=\theta,\: \theta=const, \: 0\leqslant\theta\leqslant 3\right\},
$
if all $4-$adic digits frequencies exist, i.e.
$$
\nu_i(x)=\lim_{n\to\infty}n^{-1}\#\{k: \alpha_k(x)=i, i\leqslant n\}, \:\: i=0,1,2,3.
$$
 We provided an algorithm of constructing  point from the set $S_{\theta}$, and proved continuality and every where density of the set.
 We found conditions of zero and full Lebesgue measure and estimates of Hausdorff--Besicovitch fractal dimension.
\end{abstract}

\maketitle
\section{Introduction}

We study the fractional part of a real number; therefore, we restrict our consideration to numbers from the interval $[0,1]$.
Let $s \geqslant 2$ denote a fixed natural number and let $\mathcal{A}_s={0,1,\ldots,s-1}$ denote the alphabet of the $s$--adic numeral system. It is well known that for any $x\in[0,1]$ there exists a sequence $(\alpha_n)$, where $\alpha_n\in \mathcal{A}_s$, such that
$$
 x=\displaystyle\frac{\alpha_1}{s}+
   \displaystyle\frac{\alpha_2}{s^2}+\cdots+
   \displaystyle\frac{\alpha_n}{s^n}+\cdots\equiv\Delta^s_{\alpha_1\alpha_2\ldots\alpha_n\ldots}. \eqno(1)
$$
The symbolic notation $\Delta^s_{\alpha_1\alpha_2\ldots\alpha_n\ldots}$ of series (1) is called $s$--adic representation of the number.

All irrational numbers and some rational numbers admit a unique $s$--adic representation and we call such numbers \emph{$s$--adic irrational}. The remaining numbers (the set of these numbers is countable) admit exactly two $s$--adic representations, namely:
$$
\Delta^s_{c_1\ldots c_{k-1}c_k(0)}=\Delta^s_{c_1\ldots c_{k-1}[c_k-1](s-1)},
$$
where $(i)$ denotes the period in the $s$--adic representation of the number. We call such numbers \emph{$s$--adic rational}.
To define the $n$--th digit $\alpha_n(x)$ of a number $x$ as a function of $x$, we agree to use only the first $s$--adic representation, namely, the one that has period $(0)$.

Using the $s$--adic representation of numbers it was defined and studied many different mathematical objects with complex local structure and fractal properties. These include sets, functions, distributions of random variables, dynamical systems, space transformations, and others. We can also use the following concept for the same purposes.

We define the \emph{asymptotic mean of digits of the number $x=\Delta^s_{\alpha_1\alpha_2\ldots\alpha_n\ldots}$} as the value $r(x)$ given by the limit
$$
\lim\limits_{n\to\infty}\frac{1}{n}\sum\limits^{n}_{i=1}\alpha_i(x)\equiv r(x),
$$
provided that the limit exists.

We introduced the concept of the asymptotic mean of digits and its application to the study of the topological--metric and fractal properties of sets of real numbers in paper \cite{My1}.

We focus on the topological–metric properties of sets of numbers with a given \emph{asymptotic mean of digits}, that is, sets of the form
$$
S_{\theta}\equiv\left\{
              x:\lim_{n\to\infty}\frac{1}{n}\sum^{n}_{i=1}\alpha_i(x)=\theta\geqslant 0
                \right\},
$$
where the constant $\theta$ is a predetermined parameter.

The concept of the asymptotic mean of digits of a number is closely related to the concept of digit frequency. In the case of the binary numeral system these concepts coincide. Let us recall this notion.

Let $N_i(x,n)$ denote the quantity of digits $i\in\mathcal{A}s$ in the $s$--adic representation $\Delta^s{\alpha_1\alpha_2\ldots\alpha_k\ldots}$ of a number $x\in[0,1]$ up to and including the $n$--th place, that is,
$$
N_i(x,n)=\# \{j:\, \alpha_j (x)=i, \, j\leqslant n\}.
$$

We define the \emph{frequency of the digit $i$} in the $s$--adic representation of a number $x\in[0,1]$ as the limit (if it exists)
$$
\nu_i(x)=\lim\limits_{n\to\infty}\displaystyle\frac{N_i(x,n)}{n}.
$$

The frequency function $\nu_i(x)$ of the digit $i$ in the $s$--adic representation of a number $x\in[0,1]$ is well-defined for $s$--adic irrational numbers, and for $s$--adic rational numbers it is well-defined after we agree to use only the representation with period $(0)$.

The numer $r_n(x)\equiv \frac{1}{n}\sum\limits^{n}_{i=1}\alpha_i(x)$ is called  the \emph{relative mean of the digits} of the number $x$. Since $$r_n(x)=\displaystyle\frac{N_1(x,n)}{n}+\displaystyle\frac{2N_2(x,n)}{n}+\ldots+\displaystyle\frac{(s-1)N_{s-1}(x,n)}{n},$$ we have $0\leqslant r_n(x)\leqslant s-1$. It follows that if the frequencies of all digits exist, then the asymptotic mean of digits also exists.

We call a number $x$ \emph{normal on the base $s$ (weakly normal)} if for each $i\in \mathcal{A}_s$ the frequency exists and equals $\nu_i(x)=s^{-1}$.

The set of normal numbers in the interval $[0,1]$ has full Lebesgue measure \cite{Bor1}.

We define the \emph{Besicovitch–Eggleston set} $E[\tau_0,\tau_1,\ldots,\tau_{s-1}]$ as
$$
E[\tau_0,\tau_1,\ldots,\tau_{s-1}]=\{x:\nu_i(x)=\tau_i,\,\,i=\overline{0,s-1}\}.
$$
The Hausdorff–Besicovitch fractal dimension $\alpha_0(\cdot)$ of the set $E[\tau_0,\tau_1,\ldots,\tau_{s-1}]$ can be calculated \cite{Bill} using the formula
$$
\alpha_0(E[\tau_0,\tau_1,\ldots,\tau_{s-1}])= -\frac{\ln\tau_0^{\tau_0}\tau_1^{\tau_1}\ldots\tau_{s-1}^{\tau_{s-1}}}{\ln (s-1)}.
$$

We focus on the case $s=4$, since the case $s=3$ is analyzed in detail in our paper \cite{My1}. In fact, for $s>3$ the set $S_{\theta}$ exhibits richer properties.

We consider the set of numbers with a prescribed \emph{asymptotic mean of digits} in their $4$--adic representation, that is, sets of the form
$$
S_{\theta}\equiv\left\{
              x:r(x)=\theta
                \right\},
$$
where $\theta$ is a predetermined parameter from the interval $[0,3]$.

The set $S_\theta$ is the union of three disjoint sets
$\Theta_1$, $\Theta_2$, and $\Theta_3$, such that
$$
\begin{array}{ll}
\Theta_1&\equiv\left\{x:\nu_i(x)\text{ exist}, \forall i\in\{0,1,2,3\}\right\},\\
\Theta_2&\equiv\left\{x:\text{ where the digit frequencies may exist or may fail to exist}\right\},\\
\Theta_3&\equiv\left\{x:\nu_i(x)\text{ does not exist}, \forall i\in\{0,1,2,3\}\right\}.
\end{array}
$$

We now analyze the properties of the subset $\Theta_1$ of the set $S_\theta$.

\section{The set $\Theta_1$ and the Besicovitch--Eggleston sets}

\begin{theorem}
If $\theta=0$ or $\theta=3$, then $\Theta_1$ is an anomalously fractal and everywhere dense set.
\end{theorem}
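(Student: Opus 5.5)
The plan is to show that for $\theta=0$ and $\theta=3$ the set $\Theta_1$ coincides with a degenerate Besicovitch--Eggleston set. From there I would read off Hausdorff--Besicovitch dimension zero, which is what ``anomalously fractal'' means here: a continuum set of zero fractal dimension. Density and cardinality then come from explicit constructions inside arbitrary cylinders.

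\emph{Step 1 (identification).} For $x\in\Theta_1$ the frequencies exist, so $r(x)=\nu_1(x)+2\nu_2(x)+3\nu_3(x)$. If $\theta=0$, nonnegativity forces $\nu_1=\nu_2=\nu_3=0$ and hence $\nu_0=1$, so $\Theta_1=E[1,0,0,0]$. In fact $S_0=\Theta_1$, since
$$
\frac{N_1+N_2+N_3}{n}\leqslant r_n(x)\to 0 .
$$
Symmetrically, for $\theta=3$ we have $3-r_n(x)\geqslant (N_0+N_1+N_2)/n$, which gives $\Theta_1=S_3=E[0,0,0,1]$.

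\emph{Step 2 (dimension zero).} Formally the Billingsley formula quoted above gives $-\ln(1^1\cdot 0^0\cdot0^0\cdot0^0)/\ln 4=0$ with the convention $0^0=1$. Since $\tau_i=0$ is a boundary case, I would also verify it directly. For $\varepsilon>0$, the set $E[1,0,0,0]$ is contained in $\bigcup_{m}\bigcap_{n\geqslant m}A_n$, where $A_n$ is the union of the rank-$n$ cylinders whose words have at most $\varepsilon n$ nonzero digits. The number of such cylinders is at most $\binom{n}{\lfloor\varepsilon n\rfloor}3^{\varepsilon n}\leqslant e^{h(\varepsilon)n}$, where $h(\varepsilon)\to0$ as $\varepsilon\to0$. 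Each cylinder has diameter $4^{-n}$. Hence the $\alpha$-dimensional Hausdorff measure of $\bigcap_{n\geqslant m}A_n$ vanishes for every $\alpha>h(\varepsilon)/\ln4$. Countable stability and $\varepsilon\to0$ then give $\alpha_0(\Theta_1)=0$. The case $\theta=3$ is identical with the roles of $0$ and $3$ exchanged.

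\emph{Step 3 (continuum cardinality and density).} Fix an arbitrary cylinder $\Delta^4_{c_1\ldots c_k}$. For $\theta=0$, consider the numbers whose digits are $c_1,\ldots,c_k$, then $0$ everywhere except at positions $k+j^2$ ($j\in\mathbb N$), where the digit is freely chosen from $\{1,2\}$. The number of nonzero digits up to place $n$ is $O(\sqrt n)$, so all these numbers lie in $\Theta_1$. Distinct choices give distinct points, because none of these representations ends in period $(3)$. This yields a continuum subset of $\Theta_1$ inside every cylinder, hence everywhere density and cardinality of the continuum.

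For $\theta=3$ I would use digit $3$ everywhere after $c_k$, except at positions $k+j^2$, where the digit is chosen from $\{0,1\}$. The infinitely many non-$3$ digits are exactly what avoids the forbidden period $(3)$ under the paper's convention. This convention issue is the only delicate point of the theorem. The main technical step is the counting estimate of Step 2, which I would carry out via Stirling's bound on $\binom{n}{\varepsilon n}$.
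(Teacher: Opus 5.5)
Your proposal is correct and follows the paper's route: essentially the same inequalities, $(N_1+N_2+N_3)/n\leqslant r_n(x)$ and $(N_0+N_1+N_2)/n\leqslant 3-r_n(x)$, give $\Theta_1=S_\theta=E[1,0,0,0]$ (resp.\ $E[0,0,0,1]$), and dimension zero is then read off from the Besicovitch--Eggleston formula. Your extra steps make rigorous what the paper only asserts: density, continuum cardinality, the boundary case of the formula, and the period-$(3)$ subtlety for $\theta=3$. These extra steps are the direct covering estimate and the explicit constructions inside cylinders; in the covering estimate the count should be $\sum_{j\leqslant\varepsilon n}\binom{n}{j}3^j$ rather than a single binomial term, but this is still at most $e^{h(\varepsilon)n}$, so the argument is unaffected.
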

\begin{proof}
Let $v^{(n)}_j=n^{-1}N_j(x,n)$ denote the relative frequency of the digit $j$ in the $4$--adic representation of the number $x$, and let $r_n(x)=\frac{1}{n}\sum\limits^{n}_{j=1}\alpha_j(x)$ denote the relative mean of the digits of $x$. Then the following system of equations holds:
$$
\begin{cases}
v^{(n)}_0+v^{(n)}_1+v^{(n)}_2+v^{(n)}_3=1,\\
v^{(n)}_1+2v^{(n)}_2+3v^{(n)}_3=r_n.\\
\end{cases} \eqno(*)
$$

Let $\theta=0$. If $\lim\limits_{n\to\infty}r_n(x)=0$ then for each $i \in {1,2,3}$ the following condition holds: $0\leqslant v^{(i)}_n(x)\leqslant v^{(1)}_n(x)+2v^{(2)}_n(x)+3v^{(3)}_n(x)=r_n(x)\to 0$ as $n\to\infty$, hence
$\nu_i(x)=\lim\limits_{n\to\infty}v^{(i)}_n(x)=0$ and respectively $\nu_0(x)=1$. Therefore, $S_\theta=\Theta_1=E[1,0,0,0]$. This set is everywhere dense set and its Hausdorff--Besicovitch dimension is equal to $$\alpha_0(E[1,0,0,0])=\frac{\ln1^10^00^00^0}{-\ln 4}=0.$$

Now let $\theta=3$. If $\lim\limits_{n\to\infty} r_n(x) = 3$, then multiplying the first equation of system $(*)$ by 3 and subtracting the second equation of the system, we obtain:
$3v^{(0)}_n+2v^{(1)}_n+v^{(2)}_n=3-r_n$. Hence,
$0\leqslant v^{(i)}_n(x)\leqslant 3v^{(0)}_n(x)+2v^{(1)}_n(x)+v^{(2)}_n(x)=3-r_n(x)\to 0$ as $n\to\infty$. Hence,
$\nu_i(x)=0$ for all $i\in\{0,1,2\}$ and $\nu_3(x)=1$.
Therefore, $\Theta_2=\Theta_3=\varnothing$ and $S_\theta=\Theta_1=E[0,0,0,1]$. This set is everywhere dense, and its Hausdorff–Besicovitch dimension equals $0$.
\end{proof}

If the $4$--adic representation of a number $x$ has frequencies of all digits $\nu_0$, $\nu_1$, $\nu_2$, $\nu_3$, then it has an asymptotic mean of digits $r(x)$, given by equality $$r(x)=\nu_1(x)+2\nu_2(x)+3\nu_{3}(x).$$

Thus, the set $\Theta_1$ is the union of Besicovitch--Eggleston sets $E[\tau_0,\tau_1,\tau_2,\tau_3]$ over all possible probability vectors $(\tau_0,\tau_1,\tau_2,\tau_3)$ satisfying $\tau_1+2\tau_2+3\tau_3=\theta $, that is,
$$\Theta_1=\bigcup E[\tau_0,\tau_1,\tau_2,\tau_3].$$

Let $\varphi(x)\equiv x\ln x$ with $\varphi(0)\equiv 0$ for $x\in[0;1]$ and let $\tau=(\tau_0,\tau_1,\tau_2,\tau_3)$. Define
$$C_1\equiv\left\{\tau:\tau_i\geqslant 0, i\in\{0,1,2,3\}, \sum\limits^{3}_{i=0}\tau_i=1,\tau_1+2\tau_2+3\tau_3=\theta\right\},$$
where $\theta\in(0;3)$ and let $f(\tau)\equiv\sum\limits^{3}_{i=0}\tau_i\ln \tau_i$. According to the Weierstrass theorem \cite[p.~134]{Fiht1}, the function $f(\tau)$ attains its minimum on the compact set $C_1$, and we denote this minimum by $m(\theta)$. 

\begin{theorem}
The set $\Theta_1$ is a continuous, everywhere dense, closed set of zero Lebesgue measure for $\theta \neq \frac{3}{2}$ and of full Lebesgue measure for $\theta = \frac{3}{2}$. Its Hausdorff--Besicovitch fractal dimension $\alpha_0(\Theta_1)$ satisfies the inequality $$\alpha_0(\Theta_1)\geqslant-\frac{m(\theta)}{\ln4}.$$
\end{theorem}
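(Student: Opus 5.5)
The plan is to handle the five claims (continuum cardinality, density, the measure dichotomy, the dimension bound, and ``closed'') separately. All of them rest on one explicit construction of points with prescribed digit frequencies, combined with the representation $\Theta_1=\bigcup_{\tau\in C_1}E[\tau_0,\tau_1,\tau_2,\tau_3]$ obtained above. Throughout, $\theta\in(0;3)$; the cases $\theta=0,3$ are covered by the previous theorem.

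\textbf{Step 1: the construction.} First I will check that $C_1\neq\varnothing$. For instance, $\tau_0=1-\theta/3$, $\tau_3=\theta/3$, $\tau_1=\tau_2=0$ lies in $C_1$. More importantly, for $\theta\in(0;3)$ one can choose $\tau\in C_1$ with at least two positive coordinates. Given $\tau\in C_1$, I build a digit sequence greedily: at step $n$, put $\alpha_n=i$ for an index $i$ maximizing $n\tau_i-N_i(x,n-1)$. A standard argument shows $|N_i(x,n)-n\tau_i|\le C$ uniformly in $n$, so $\nu_i(x)=\tau_i$ and hence $r(x)=\theta$. I expect this bounded-discrepancy estimate to be the main technical step. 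If the greedy bound is awkward to write out, I will use the fallback of rational approximations of $\tau$ concatenated in blocks of growing length.

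\textbf{Step 2: continuum cardinality and density.} Fix two digits $a\ne b$ with $\tau_a,\tau_b>0$. In the sequence from Step 1, overwrite the positions $n=2^k$ with an arbitrary choice of $a$ or $b$. This alters only $O(\log n)$ of the first $n$ digits, so the frequencies are unchanged. Distinct choice sequences give distinct points: the resulting representations differ and never end in $(0)$ or $(3)$. This gives an injection of $\{a,b\}^{\mathbb N}$ into $\Theta_1$, so $\Theta_1$ has the cardinality of the continuum. For density, given any cylinder $\Delta^4_{c_1\ldots c_k}$, prefix the constructed sequence with $c_1\ldots c_k$; a finite prefix does not change any frequency. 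Hence every cylinder meets $\Theta_1$.

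\textbf{Step 3: measure.} By Borel's theorem \cite{Bor1}, almost every $x$ is normal, so $\nu_i(x)=1/4$ for all $i$ and $r(x)=\tfrac{0+1+2+3}{4}=\tfrac32$. If $\theta=\tfrac32$, $\Theta_1$ contains the set of normal numbers and so has full measure. If $\theta\ne\tfrac32$, $\Theta_1$ is disjoint from that full-measure set and so has measure zero.

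\textbf{Step 4: dimension.} Since $E[\tau]\subset\Theta_1$ for every $\tau\in C_1$, monotonicity of Hausdorff dimension gives $\alpha_0(\Theta_1)\ge\sup_{\tau\in C_1}\alpha_0(E[\tau])$. Billingsley's formula \cite{Bill} gives $\alpha_0(E[\tau])=-f(\tau)/\ln 4$; the base must be $s=4$, so the $\ln(s-1)$ in the displayed formula is a misprint. Evaluating at a minimizer $\tau^*$ of $f$ on $C_1$, which exists by the Weierstrass argument above, yields $\alpha_0(\Theta_1)\ge -m(\theta)/\ln4$.

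\textbf{On ``closed''.} The word cannot be meant in the topological sense. A closed everywhere dense subset of $[0,1]$ is $[0,1]$ itself, which would contradict the measure-zero claim for $\theta\ne\tfrac32$. I will therefore prove the property that the construction actually supports: $\Theta_1$ is closed under changing, inserting or deleting finitely many digits, since frequencies are tail invariants. This invariance is exactly what drives Step 2.
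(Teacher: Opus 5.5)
Your proposal is correct. For the measure dichotomy (Borel's theorem, together with $r=\frac32$ for normal numbers) and for the dimension bound (monotonicity plus the Besicovitch--Eggleston formula at a minimizer of $f$ on $C_1$) it is exactly the paper's argument. You are also right that the $\ln(s-1)$ in the introduction should read $\ln s$; the proof itself uses $\ln 4$.

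\textbf{Cardinality and density.} Here your route differs. The paper constructs nothing: it takes as known that one Besicovitch--Eggleston set $E[\tau]$ with $\tau\in C_1$ has continuum cardinality and is everywhere dense, and transfers this to $\Theta_1\supset E[\tau]$. You prove these properties from scratch: a greedy digit sequence, overwriting positions $2^k$ for the continuum, and prefixing for density. That makes your version self-contained.

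The step you flag as the main difficulty is short. Put $D_i(n)=n\tau_i-N_i(x,n)$, so $\sum_i D_i(n)=0$. At step $n$ the four numbers $n\tau_i-N_i(x,n-1)$ sum to $1$, so the chosen index has value at least $\frac14$. Hence $D_i(n)\geqslant-\frac34$ for all $i$ and $n$, and therefore $D_i(n)\leqslant\frac94$. The paper's Section 3 construction with $[\tau_i n]$ would also serve as your fallback.

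\textbf{Closedness.} Here you are right and the paper is not. The paper says every $x_0\in\Theta_1$ is the limit of $x_n=\Delta^4_{a_1(x_0)\ldots a_n(x_0)}$. At most this shows that points of $\Theta_1$ are not isolated, which does not imply closedness. Moreover, these $x_n$ are $4$--adic rational with $r(x_n)=0$, so for $\theta>0$ they do not even lie in $\Theta_1$.

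Closedness also fails for $\theta=\frac32$, a case your contradiction argument does not cover. There $0\notin\Theta_1$, yet $0$ lies in the closure of the dense set $\Theta_1$.

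The paper's wording, ``all its points are limit points'', suggests the intended property is that $\Theta_1$ has no isolated points (is dense in itself). That follows immediately from your density step. It is a closer reading of the paper than invariance under finitely many digit changes, although both properties are true.
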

\begin{proof}
Since the set $E[\tau_0,\tau_1,\tau_2,\tau_3]$ is continuous and everywhere dense set, the same holds for the set $\Theta_1$. The set $\Theta_1$ is closed since all its points are limit points.
Indeed, for any $\Theta_1\ni x_0=\Delta^4_{a_1a_2\ldots a_n\ldots}$ there exists a sequence $x_n=\Delta^4_{a_1(x_0)a_2(x_0)\ldots a_n(x_0)}$ such that $\lim\limits_{n\to\infty}x_n=x_0$.

If $\theta \neq \displaystyle\frac{3}{2}$, then $\Theta_1$ contains no normal numbers. Since almost all numbers (in the sense of Lebesgue measure) are normal, then we have $\lambda(\Theta_1) = 0$. On the other hand, for $\theta = \displaystyle\frac{3}{2}$, the following inclusion holds:
$\Theta_1\supset E\left[\displaystyle\frac{1}{4}, \displaystyle\frac{1}{4},\displaystyle\frac{1}{4},\displaystyle\frac{1}{4}\right]$, where $\lambda(E)=1$. Thus, $\lambda(\Theta_1)=1$.

Let $\tau = (p_0, p_1, p_2, p_3)$ be such that $f(\tau) = m(\theta)$. Then, according to the Besicovitch–Eggleston formula, we have
$$\alpha_0(E[p_0,p_1,p_2,p_3])=-\frac{\ln p_0^{p_0}p_1^{p_1}p_2^{p_2}p_3^{p_3}}{\ln 4}=
-\frac{f(\tau)}{\ln 4}=-\frac{m(\theta)}{\ln 4}.$$
Since $E[p_0,p_1,p_2,p_3]\subset \Theta_1$, then $\alpha_0(\Theta_1)\geqslant\alpha_0(E[p_0,p_1,p_2,p_3])=-\displaystyle\frac{m(\theta)}{\ln 4}.$
\end{proof}

\section{An example of a number from the set $\Theta_1$}

Let us present an algorithm for constructing a number $x\in E[\tau_0,\tau_1,\tau_2,\tau_3]$.

We consider the sequences $\tau_{in}=[\tau_i\cdot n]$ and $\tau'_{in}=\tau_{i(n+1)}-\tau_{in}$. Clearly,
$$
 [\tau_{i(n+1)}]-[\tau_{in}]=\left[[\tau_{in}]+\{\tau_{in}\}+\tau_i\right]-[\tau_{in}]=
 [\tau_{in}]+[\{\tau_{in}\}+\tau_i]-[\tau_{in}]=[\{\tau_{in}\}+\tau_i]\in\{0,1\}.
$$
It is evident that
$$
\frac{\tau_{in}}{n}=\frac{[\tau_i\cdot n]}{n}=\frac{\tau_i\cdot n-\{\tau_i\cdot n\}}
{n}=\tau_i-\frac{\{\tau_i\cdot n\}}{n}\to\tau_i,~n\to\infty.
$$
We construct the number $x$ as follows. In the first step, we sequentially write $\tau'_{01}$ zeros, $\tau'_{11}$ ones, $\tau'_{21}$ twos, and $\tau'_{31}$ threes. After 
the $k$-th step, in the $(k+1)$-step we append to the alredy written sequence of $\tau'_{0k}$ zeros, $\tau'_{1k}$ ones, $\tau'_{2k}$ twos, and $\tau'_{3k}$ threes. As a result, among the first $\sum\limits_{i=0}^{3} \tau_{in}$ symbols of the $4$--adic representation of $x$ there are exactly $\tau_{in}$ digits equal to $i$.
Let $n$ be a sufficiently large natural number. Since for any $x \in \mathbb{R}$ we have $x - 1 < [x] \leqslant x$, then it follows that\\
$
n=\sum\limits^{3}_{i=0}\tau_i\cdot n\geqslant\sum\limits^{3}_{i=0}[\tau_i\cdot n]=\sum\limits^{3}_{i=0}\tau_{in},
$~~~~
$
\sum\limits^{3}_{i=0}\tau_{i(n+4)}=\sum\limits^{3}_{i=0}[\tau_{i(n+4)}]\geqslant
\sum\limits^{3}_{i=0}\tau_i\cdot(n+4)-4=n.
$

Then $v^{(n)}_i\geqslant\displaystyle\frac{\tau_{in}}{n}\to\tau_i$,
$v^{(n)}_i\leqslant\displaystyle\frac{\tau_{i(n+3)}}{n}=\displaystyle\frac{\tau_{i(n+3)}}{n+3}\cdot \displaystyle\frac{n+3}{n}\to\tau_i$ as $n\to\infty$.
Hence, $\nu_i(x)=\tau_i$ for all $i\in\{0,1,2,3\}$.

Let $(s_k)$ be a sequence of positive numbers such that
$$
\lim\limits_{k\to\infty}s_k=\infty,~~~~
\lim\limits_{k\to\infty}\displaystyle\frac{s_{k+1}}{\sum\limits^{k}_{i=1}s_i}=0,~~~~
\lim\limits_{k\to\infty}\displaystyle\frac{k}{\sum\limits^{k}_{i=1}s_i}=0.
$$
Let $\|\tau_{in}\|$ be a $(4 \times \infty)$ matrix whose elements are the numbers constructed above.

We consider the following form of representation of a real number $x \in [0,1]$:
$$
  \hat{x}=\Delta^4_{\underbrace{\underbrace{0\ldots0}_{[\tau_{01}s_1]}
                          \underbrace{1\ldots1}_{[\tau_{11}s_1]}
                          \underbrace{2\ldots2}_{[\tau_{21}s_1]}
                          \underbrace{3\ldots3}_{[\tau_{31}s_1]}}_{\text{1st block}}\ldots
              \underbrace{\underbrace{0\ldots0}_{[\tau_{0k}s_k]}
                          \underbrace{1\ldots1}_{[\tau_{1k}s_k]}
                          \underbrace{2\ldots2}_{[\tau_{2k}s_k]}
                          \underbrace{3\ldots3}_{[\tau_{3k}s_k]}}_{\text{k-th block}}\ldots}.\eqno{(1)}
$$

\begin{theorem}\label{teo1}
If $\|\tau_{in}\|$ is a $(4 \times \infty)$ matrix such that for any natural number $n\in N$ the conditions $\tau_{0n}+\tau_{1n}+\tau_{2n}+\tau_{3n}=1$ and $\tau_{1n}+2\tau_{2n}+3\tau_{3n}=\theta$ are satisfied, then
$$\lim\limits_{n\to\infty}r_n(\hat{x})=\theta.$$
\end{theorem}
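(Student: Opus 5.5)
Write $\ell_k=\sum_{i=0}^3[\tau_{ik}s_k]$ for the length of the $k$-th block of $\hat{x}$ and $L_k=\ell_1+\cdots+\ell_k$. Let $D_k=\sum_{i=1}^3 i[\tau_{ik}s_k]$ be the digit sum of the $k$-th block. The plan is to compare $\ell_k$ and $D_k$ with $s_k$ and $\theta s_k$, control the sums at block boundaries $n=L_k$, and then interpolate to arbitrary $n$.

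\emph{Step 1: block estimates.} Since $x-1<[x]\leqslant x$ and the conditions of the theorem give $\sum_i\tau_{ik}=1$ and $\sum_i i\tau_{ik}=\theta$, we get
$$
s_k-4<\ell_k\leqslant s_k,\qquad \theta s_k-6<D_k\leqslant\theta s_k .
$$
Summing over $k$ and writing $S_k=s_1+\cdots+s_k$,
$$
|L_k-S_k|\leqslant 4k,\qquad \Bigl|\sum_{j\leqslant k}D_j-\theta S_k\Bigr|\leqslant 6k .
$$
Because $k/S_k\to 0$, it follows that $L_k/S_k\to1$. Hence
$$
r_{L_k}(\hat{x})=\frac{\sum_{j\le k}D_j}{L_k}=\frac{\theta S_k+O(k)}{S_k+O(k)}\to\theta .
$$
This uses $S_k\to\infty$, which follows from $s_k\to\infty$.

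\emph{Step 2: intermediate $n$.} For $L_k<n\leqslant L_{k+1}$, the first $n$ digits consist of the first $k$ blocks followed by a partial $(k+1)$-st block of length $m=n-L_k\leqslant\ell_{k+1}\leqslant s_{k+1}$. The partial block contributes a digit sum between $0$ and $3m$. Therefore
$$
\Bigl|r_n(\hat x)-r_{L_k}(\hat x)\Bigr|
=\Bigl|\frac{\sum_{j\le k}D_j+d}{L_k+m}-\frac{\sum_{j\le k}D_j}{L_k}\Bigr|
\leqslant \frac{C\,m}{L_k}\leqslant\frac{C\,s_{k+1}}{L_k},
$$
where $0\leqslant d\leqslant 3m$ and $C$ is an absolute constant (for instance $C=6$, using $r_{L_k}\leqslant 3$). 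Since $L_k\sim S_k$ and $s_{k+1}/S_k\to0$ by hypothesis, the right-hand side tends to $0$. Combined with Step 1, this gives $r_n(\hat x)\to\theta$.

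\emph{Main obstacle.} Step 2 is where the construction's hypotheses on $(s_k)$ are essential. Without $s_{k+1}=o(S_k)$, an arrangement inside a long block (all zeros first, then ones, and so on) would make $r_n$ oscillate. The term $k/S_k\to0$ handles the accumulated rounding errors from the integer parts. I would also check one degenerate case: if some blocks are empty, or if $L_k$ is small for small $k$, this only affects finitely many $k$, so it does not matter asymptotically.
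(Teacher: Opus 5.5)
Your proof is correct and is essentially the paper's argument. Step 1 reproduces the paper's estimates $\theta-6k/S_k<A_k\leqslant\theta$ and $1-4k/S_k<B_k\leqslant 1$ (with $S_k=\sum_{i\leqslant k}s_i$), and the incomplete block is absorbed via $s_{k+1}/S_k\to0$ exactly as there; the only difference is cosmetic: you bound $|r_n(\hat x)-r_{L_k}(\hat x)|\leqslant C s_{k+1}/L_k$ directly, whereas the paper sandwiches $r_n(\hat x)$ between digit sums of completed blocks divided by the lengths at the adjacent block boundaries.
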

\begin{proof}
Since $[\tau_{0k}s_k]+[\tau_{1k}s_k]+[\tau_{2k}s_k]+[\tau_{3k}s_k]>
 \tau_{0k}s_{k-1}+\tau_{1k}s_{k-1}+\tau_{2k}s_{k-1}+\tau_{3k}s_{k-1}
 =s_{k-4}\to\infty$ as $k\to\infty,$
then the number $\hat{x}$ is constructed correctly.

Let $n$ be a sufficiently large natural number, and suppose the $n$‑th digit of the number $\hat{x}$ falls within the $k$‑th block. We introduce the following notation:\\
$A_k\equiv \lim\limits_{k\to\infty}
     \displaystyle\frac{\sum\limits^{k}_{i=1}([\tau_{1i}s_i]+2[\tau_{2i}s_i]+3[\tau_{3i}s_i])}{\sum\limits^{k}_{i=1}s_i},
$
$B_k\equiv \lim\limits_{k\to\infty}
     \displaystyle\frac{\sum\limits^{k}_{i=1}([\tau_{0i}s_i]+[\tau_{1i}s_i]+[\tau_{2i}s_i]+[\tau_{3i}s_i])}{\sum\limits^{k}_{i=1}s_i}.
$

then as $k\to\infty$ we have\\
$
A_k\leqslant
\displaystyle\frac{\sum\limits^{k}_{i=1}(\tau_{1i}s_i+2\tau_{2i}s_i+3\tau_{3i}s_i)}{\sum\limits^{k}_{i=1}s_i}=
\displaystyle\frac{\sum\limits^{k}_{i=1}\theta s_i}{\sum\limits^{k}_{i=1}s_i}=\theta,
$\\
$
A_k>
\displaystyle\frac{\sum\limits^{k}_{i=1}((\tau_{1i}s_i-1)+2(\tau_{2i}s_i-1)+3(\tau_{3i}s_i-1))}{\sum\limits^{k}_{i=1}s_i}=
  \theta-\frac{6k}{\sum\limits^{k}_{i=1}s_i}.
$\\

Hence,
$
A_k=\lim\limits_{k\to\infty}
     \displaystyle\frac{\sum\limits^{k}_{i=1}([\tau_{1i}s_i]+2[\tau_{2i}s_i]+3[\tau_{3i}s_i])}{\sum\limits^{k}_{i=1}s_i}=\theta.
$
\\
On the other hand\\
$
B_k\leqslant
\displaystyle\frac{\sum\limits^{k}_{i=1}(\tau_{0i}s_i+\tau_{1i}s_i+\tau_{2i}s_i+\tau_{3i}s_i)}{\sum\limits^{k}_{i=1}s_i}=
\displaystyle\frac{\sum\limits^{k}_{i=1}s_i}{\sum\limits^{k}_{i=1}s_i}=1,
$\\
$
B_k>
\displaystyle\frac{\sum\limits^{k}_{i=1}(\tau_{0i}s_i+\tau_{1i}s_i+\tau_{2i}s_i+\tau_{3i}s_i-4)}{\sum\limits^{k}_{i=1}s_i}=
1-\frac{4k}{\sum\limits^{k}_{i=1}s_i}.
$

Therefore
$
B_k=\lim\limits_{k\to\infty}
     \displaystyle\frac{\sum\limits^{k}_{i=1}([\tau_{0i}s_i]+[\tau_{1i}s_i]+[\tau_{2i}s_i]+[\tau_{3i}s_i])}{\sum\limits^{k}_{i=1}s_i}=1.
$
\\
$
 \displaystyle\frac{\sum\limits^{k+1}_{i=1}([\tau_{0i}s_i]+[\tau_{1i}s_i]+[\tau_{2i}s_i]+[\tau_{3i}s_i])}{\sum\limits^{k}_{i=1}s_i}=
 \displaystyle\frac{\sum\limits^{k}_{i=1}([\tau_{0i}s_i]+[\tau_{1i}s_i]+[\tau_{2i}s_i]+[\tau_{3i}s_i])}{\sum\limits^{k}_{i=1}s_i}+
$\\
$
+\displaystyle\frac{s_{k+1}-(\{\tau_{0(k+1)}s_{k+1}\}+\{\tau_{1(k+1)}s_{k+1}\}+\{\tau_{2(k+1)}s_{k+1}\}+\{\tau_{3(k+1)}s_{k+1}\})}{\sum\limits^{k}_{i=1}s_i}
\to 1.
$
\\
We have
$$
r_n(\hat{x})\geqslant
  \frac{\sum\limits^{k}_{i=1}([\tau_{1i}s_i]+2[\tau_{2i}s_i]+3[\tau_{3i}s_i])}{\sum\limits^{k}_{i=1}([\tau_{0i}s_i]+[\tau_{1i}s_i]+[\tau_{2i}s_i]+[\tau_{3i}s_i])}=
  \frac{A_k}{B_k}
\to \frac{\theta}{1}=\theta.
$$
\\
Let
$B'_k\equiv \lim\limits_{k\to\infty}
     \displaystyle\frac{\sum\limits^{k}_{i=1}([\tau_{0i}s_i]+[\tau_{1i}s_i]+[\tau_{2i}s_i]+[\tau_{3i}s_i])}{\sum\limits^{k+1}_{i=1}s_i}.
$ Then \\
$
B'_k =
 \displaystyle\frac{\sum\limits^{k+1}_{i=1}([\tau_{0i}s_i]+[\tau_{1i}s_i]+[\tau_{2i}s_i]+[\tau_{3i}s_i])}{\sum\limits^{k+1}_{i=1}s_i}-
$
\\
$
-\displaystyle\frac{s_{k+1}-(\{\tau_{0(k+1)}s_{k+1}\}+\{\tau_{1(k+1)}s_{k+1}\}+\{\tau_{2(k+1)}s_{k+1}\}+\{\tau_{3(k+1)}s_{k+1}\})}
     {\sum\limits^{k+1}_{i=1}s_i}\to 1.
$
\\
We have
$$
r_n(\hat{x})\leqslant
  \frac{\sum\limits^{k+1}_{i=1}([\tau_{1i}s_i]+2[\tau_{2i}s_i]+3[\tau_{3i}s_i])}{\sum\limits^{k}_{i=1}([\tau_{0i}s_i]+[\tau_{1i}s_i]+[\tau_{2i}s_i]+[\tau_{3i}s_i])}=
  \frac{A_k}{B'_k}
  \to \frac{\theta}{1}=\theta.
$$

Hence, $\lim\limits_{n\to\infty}r_n(\hat{x})=\theta.$
\end{proof}

\begin{theorem}\label{teo2}
If $\|\tau_{in}\|$ is a stochastic $(4 \times \infty)$ matrix such that for a fixed $j \in {0,1,2,3}$ holds $\lim\limits_{n\to\infty}\tau_{jn}=\lambda_j,$
then
$$
\nu_j(\hat{x})=\lambda_j,
$$
where the number $\hat{x}$ has the form given in (1).\end{theorem}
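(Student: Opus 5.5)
We show that $N_j(\hat x,n)/n\to\lambda_j$, where $\hat x$ is built from blocks as in (1). We use the same bookkeeping as in the proof of Theorem~\ref{teo1}. Write $S_k=\sum_{i=1}^k s_i$ and let $L_k=\sum_{i=1}^k\sum_{m=0}^{3}[\tau_{mi}s_i]$ be the length of the first $k$ blocks. The proof of Theorem~\ref{teo1} already gives $L_k/S_k\to 1$ and $L_k/S_{k+1}\to 1$; this uses $s_{k+1}/S_k\to 0$ and $k/S_k\to0$. Stochasticity of the matrix gives $\sum_m\tau_{mi}=1$, and that is all those estimates need. We may take $s_k\ge0$ non-degenerate, so $L_k\to\infty$.

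The first step concerns block endpoints. The number of digits $j$ among the first $L_k$ places is $D_k=\sum_{i=1}^k[\tau_{ji}s_i]$. This satisfies
$$\sum_{i=1}^k\tau_{ji}s_i-k<D_k\le\sum_{i=1}^k\tau_{ji}s_i .$$
Since $k/S_k\to0$, we get $D_k/S_k-\frac{1}{S_k}\sum_{i\le k}\tau_{ji}s_i\to0$. Next comes the key Cesàro--Toeplitz step: $\tau_{ji}\to\lambda_j$ and $S_k\to\infty$ imply $\frac{1}{S_k}\sum_{i\le k}\tau_{ji}s_i\to\lambda_j$. To prove it, fix $\varepsilon>0$ and choose $i_0$ with $|\tau_{ji}-\lambda_j|<\varepsilon$ for $i>i_0$. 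The head $\sum_{i\le i_0}|\tau_{ji}-\lambda_j|s_i$ is a constant, so after division by $S_k\to\infty$ it tends to $0$. The tail is at most $\varepsilon$. Hence $D_k/L_k\to\lambda_j$.

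The second step handles a general $n$, with $L_k< n\le L_{k+1}$, and this is the main obstacle. Inside block $k+1$ the digits are grouped (all zeros, then ones, and so on), so the partial count can sit anywhere between $D_k$ and $D_{k+1}$. I would sandwich:
$$\frac{D_k}{L_{k+1}}\le\frac{N_j(\hat x,n)}{n}\le\frac{D_{k+1}}{L_k}.$$
Both bounds tend to $\lambda_j$, because $L_{k+1}/L_k\to1$. This ratio limit follows from $L_k/S_k\to1$, $L_{k+1}/S_{k+1}\to1$ and $S_{k+1}/S_k=1+s_{k+1}/S_k\to1$. The same argument gives $D_{k+1}/L_{k+1}\to\lambda_j$, and therefore $D_{k+1}/L_k=(D_{k+1}/L_{k+1})(L_{k+1}/L_k)\to\lambda_j$. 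The condition $s_{k+1}/S_k\to0$ is used exactly here. It makes the last, possibly long and arbitrarily arranged, block negligible relative to the prefix. That gives $\nu_j(\hat x)=\lambda_j$.
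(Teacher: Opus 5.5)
Your proof is correct and follows essentially the paper's route: floor bounds controlled by $k/S_k\to 0$, a Ces\`aro averaging step giving $\frac{1}{S_k}\sum_{i\le k}\tau_{ji}s_i\to\lambda_j$ (you prove this directly with an $\varepsilon$-argument where the paper cites Stolz's theorem), and a sandwich between consecutive block endpoints using $s_{k+1}/S_k\to 0$. Your sandwich $D_k/L_{k+1}\le N_j(\hat x,n)/n\le D_{k+1}/L_k$ for $L_k<n\le L_{k+1}$ is also indexed more carefully than the paper's. The paper's lower bound $D_k/L_k$, for $n$ inside the $k$-th block, is not literally valid, though the limit is unaffected.
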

\begin{proof}
We denote
$
x_n=\sum\limits^{n}_{i=1}\tau_{ij}s_i,
$
$
y_n=\sum\limits^{n}_{i=1}s_i.
$
Then
$
\lim\limits_{n\to\infty}y_n=\infty,
$
$
\lim\limits_{n\to\infty}\displaystyle\frac{x_{n+1}-x_n}{y_{n+1}-y_n}=
 \lim\limits_{n\to\infty}\displaystyle\frac{\tau_{j(n+1)}s_{n+1}}{s_{n+1}}=
 \lim\limits_{n\to\infty}{\tau_{j(n+1)}}=\lambda_j.
$

Therefore, by the Stolz theorem \cite[p.~67]{Fiht} we have
$$
\lim\limits_{n\to\infty}\frac{\sum\limits^{n}_{i=1}\tau_{ji}s_i}{\sum\limits^{n}_{i=1}s_i}=
\lim\limits_{n\to\infty}\frac{x_n}{y_n}=\lambda_j.
$$

It follows from the proof of the previous theorem that the number $\hat{x}$ is correctly constructed and\\
$
\lim\limits_{k\to\infty}\displaystyle
                        \frac{\sum\limits^{k-1}_{i=1}([\tau_{0i}s_i]+[\tau_{1i}s_i]+[\tau_{2i}s_i]+[\tau_{3i}s_i])}
                             {\sum\limits^{k}_{i=1}s_i}=1=
\lim\limits_{k\to\infty}\displaystyle
                        \frac{\sum\limits^{k+1}_{i=1}([\tau_{0i}s_i]+[\tau_{1i}s_i]+[\tau_{2i}s_i]+[\tau_{3i}s_i])}
                             {\sum\limits^{k}_{i=1}s_i}.
$

Let $n$ be a sufficiently large natural number, and suppose the $n$--th digit of the number $\hat{x}$ falls within the $k$--th block. Then\\
$
 \displaystyle\frac{\sum\limits^{k}_{i=1}[\tau_{ji}s_i]}{\sum\limits^{k}_{i=1}s_i}\leqslant
 \displaystyle\frac{\sum\limits^{k}_{i=1}\tau_{ji}s_i}{\sum\limits^{k}_{i=1}s_i}\to\lambda_j
$ ~~and~~
$
 \displaystyle\frac{\sum\limits^{k}_{i=1}[\tau_{ji}s_i]}{\sum\limits^{k}_{i=1}s_i}>
 \displaystyle\frac{\sum\limits^{k}_{i=1}(\tau_{ji}s_i-1)}{\sum\limits^{k}_{i=1}s_i}=
 \displaystyle\frac{x_k}{y_k}-\frac{k}{\sum\limits^{k}_{i=1}s_i}\to\lambda_j,~(k\to\infty).
$
Hence,
$
\lim\limits_{k\to\infty}\displaystyle\frac{\sum\limits^{n}_{i=1}[\tau_{ji}s_i]}{\sum\limits^{n}_{i=1}s_i}=\lambda_j.
$

$$
N_j(\hat{x},n)\geqslant
 \frac{\sum\limits^{k}_{i=1}[\tau_{ji}s_i]}
      {\sum\limits^{k}_{i=1}([\tau_{0i}s_i]+[\tau_{1i}s_i]+[\tau_{2i}s_i]+[\tau_{3i}s_i])}
=\frac{\frac{\sum\limits^{k}_{i=1}[\tau_{ji}s_i]}{y_k}}
      {\frac{\sum\limits^{k}_{i=1}([\tau_{0i}s_i]+\ldots+[\tau_{3i}s_i])}{y_k}}
 \to\frac{\lambda_j}{1}=\lambda_j,~k\to\infty,
$$
$$
N_j(\hat{x},n)\leqslant
 \frac{\sum\limits^{k+1}_{i=1}[\tau_{ji}s_i]}
      {\sum\limits^{k}_{i=1}([\tau_{0i}s_i]+[\tau_{1i}s_i]+[\tau_{2i}s_i]+[\tau_{3i}s_i])}
=\frac{\frac{\sum\limits^{k+1}_{i=1}[\tau_{ji}s_i]}{y_{k+1}}}
      {\frac{\sum\limits^{k}_{i=1}([\tau_{0i}s_i]+\ldots+[\tau_{3i}s_i])}{y_{k+1}}}
 \to\frac{\lambda_j}{1}=\lambda_j,~k\to\infty.
$$
Therefore, $\nu_j(\hat{x})=\lambda_j$.
\end{proof}

\begin{theorem}\label{teo3}
Let $(s^{(r)}_k)$ for $r\in\{1,2\}$ be sequences of positive numbers such that
$\lim\limits_{k\to\infty}s^{(r)}_k=\infty$. Let $\|p^{(1)}\|=\|p^{(1)}_{in}\|$,
$\|p^{(2)}\|=\|p^{(2)}_{in}\|$ be stochastic $(4 \times \infty)$ matrices. Let
$$
x(\|p^{(r)}\|;\|s^{(j)}_k\|)= \Delta^4_
  {\underbrace{\underbrace{0\ldots0}_{[p^{(r)}_{01}s^{(j)}_1]}
               \underbrace{1\ldots1}_{[p^{(r)}_{11}s^{(j)}_1]}
               \underbrace{2\ldots2}_{[p^{(r)}_{21}s^{(j)}_1]}
               \underbrace{3\ldots3}_{[p^{(r)}_{31}s^{(j)}_1]}}_{\text{1st block}}\ldots
   \underbrace{\underbrace{0\ldots0}_{[p^{(r)}_{0k}s^{(j)}_k]}
               \underbrace{1\ldots1}_{[p^{(r)}_{1k}s^{(j)}_k]}
               \underbrace{2\ldots2}_{[p^{(r)}_{2k}s^{(j)}_k]}
               \underbrace{3\ldots3}_{[p^{(r)}_{3k}s^{(j)}_k]}}_{\text{k-th block}}\ldots} .
$$

If $\lim\limits_{k\to\infty}|s^{(1)}_k-s^{(2)}_k|=\infty$ then $x(\|p^{(1)}\|;\|s^{(1)}_k\|)\neq x(\|p^{(2)}\|;\|s^{(2)}_k\|)$.

If $\overline{\lim\limits_{n\to\infty}}\sum\limits^{3}_{i=0}|p^{(1)}_{in}-p^{(2)}_{in}|>0
$, then $x(\|p^{(1)}\|;\|s^{(1)}_k\|)\neq x(\|p^{(2)}\|;\|s^{(2)}_k\|)
$.
\end{theorem}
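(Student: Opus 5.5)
The plan is to reduce the inequality of the two numbers to an inequality of their digit sequences, and then to read the block structure off the digit sequence.

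\emph{Step 1: equal numbers have equal digit sequences.} Suppose, to the contrary, that $x(\|p^{(1)}\|;\|s^{(1)}_k\|)=x(\|p^{(2)}\|;\|s^{(2)}_k\|)$. Both numbers are given by explicit digit strings. I first check that, outside a degenerate situation, neither string ends in the period $(0)$ or the period $(3)$. Since $s^{(r)}_k\to\infty$, every block is long, and as long as infinitely many blocks contain at least two distinct digits, the tail is not constant. The constant-tail case has to be excluded separately; the ambiguous representations $\Delta^4_{c_1\ldots c_k(0)}=\Delta^4_{c_1\ldots[c_k-1](3)}$ can only arise there. With that settled, the two $4$--adic representations are unique, so equality of the numbers forces the digit sequences $(\alpha_n)$ to coincide.

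\emph{Step 2: recovering the blocks.} From the common digit sequence I recover the block lengths $L^{(r)}_k=\sum_{i=0}^3[p^{(r)}_{ik}s^{(r)}_k]$ and the individual counts $[p^{(r)}_{ik}s^{(r)}_k]$. Inside a block the digits are non-decreasing, so a descent $\alpha_n>\alpha_{n+1}$ can only occur at a block boundary. The difficulty is that a boundary need not be a descent; for instance, a block ending in $1$ may be followed by a block starting with $2$. I expect this to be the main obstacle. My first attempt is to work under the nondegeneracy condition that $[p^{(r)}_{0k}s^{(r)}_k]\geqslant1$ and $[p^{(r)}_{3k}s^{(r)}_k]\geqslant1$ for all large $k$. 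Then every block begins with $0$ and ends with $3$, and the boundaries are exactly the positions of the pattern $30$. In this case the decomposition into blocks is canonical, and equal sequences give $L^{(1)}_k=L^{(2)}_k$ and $[p^{(1)}_{ik}s^{(1)}_k]=[p^{(2)}_{ik}s^{(2)}_k]$ for all large $k$. Without such a condition I would try to compare the positions of descents in the two sequences and show that the hypotheses still force a mismatch; this is the part I am least sure of.

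\emph{Step 3: the two conclusions.} Since $\sum_i p^{(r)}_{ik}=1$, we have $s^{(r)}_k-4<L^{(r)}_k\leqslant s^{(r)}_k$. Hence $L^{(1)}_k=L^{(2)}_k$ gives $|s^{(1)}_k-s^{(2)}_k|<4$, which contradicts $|s^{(1)}_k-s^{(2)}_k|\to\infty$ and proves the first assertion. For the second assertion, the equality $[p^{(1)}_{ik}s^{(1)}_k]=[p^{(2)}_{ik}s^{(2)}_k]$ gives $|p^{(1)}_{ik}s^{(1)}_k-p^{(2)}_{ik}s^{(2)}_k|<1$. Combined with $|s^{(1)}_k-s^{(2)}_k|<4$ and $0\leqslant p^{(r)}_{ik}\leqslant1$, this yields
$$|p^{(1)}_{ik}-p^{(2)}_{ik}|\leqslant\frac{1+4}{s^{(1)}_k}\to0.$$
Therefore $\sum_{i=0}^3|p^{(1)}_{in}-p^{(2)}_{in}|\to0$, which contradicts the assumption that its upper limit is positive.
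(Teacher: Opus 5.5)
\textbf{There is a genuine gap at Step 2, and it cannot be filled, because the statement is false as written.} Your Step 3 is correct. Step 2, together with the constant-tail case you postpone in Step 1, cannot be completed, since equal numbers need not have equal blocks.

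Here is a counterexample to both assertions at once. Let $s^{(1)}_k=k$, with the columns of $\|p^{(1)}\|$ alternately $(1,0,0,0)$ and $(0,1,0,0)$. Let $s^{(2)}_k=4k-1$, with $k$-th column $\bigl(\frac{2k-1}{4k-1},\frac{2k}{4k-1},0,0\bigr)$. Both digit strings are $0\,1^2\,0^3\,1^4\,0^5\ldots$ (where $d^m$ means $m$ copies of $d$), because the $k$-th block of the second string is the concatenation of blocks $2k-1$ and $2k$ of the first. This string is not eventually constant, so there is no $4$-adic ambiguity, and the two numbers coincide. Yet $|s^{(1)}_k-s^{(2)}_k|=3k-1\to\infty$ and $\sum_i|p^{(1)}_{ik}-p^{(2)}_{ik}|\to1$.

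The failure persists under the readings that the paper's inequalities tacitly use, namely $p^{(1)}=p^{(2)}$ in the first part and $s^{(1)}=s^{(2)}$ in the second.
\begin{itemize}
\item If all columns are $(1,0,0,0)$, the number is $0$ for every choice of $(s_k)$.
\item Take $s^{(1)}_k=s^{(2)}_k=k$, with column sequences $(0,1,0,0),(1,0,0,0),(1,0,0,0),\ldots$ and $(1,0,0,0),(0,0,0,1),(0,0,0,1),\ldots$. These give $\Delta^4_{1(0)}=\Delta^4_{0(3)}=\frac14$, while $\sum_i|p^{(1)}_{ik}-p^{(2)}_{ik}|=2$.
\end{itemize}
The paper's proof simply asserts that ``all $n$-th blocks are equal''. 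That is exactly the step you flagged, so what is missing is an extra hypothesis, not an idea you overlooked.

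\textbf{Your repair needs to hold for every $k$, not just large $k$.} Your nondegeneracy condition is the right kind of fix. But imposed only for large $k$, it does not give $L^{(1)}_k=L^{(2)}_k$. The occurrences of the pattern $30$ are determined by the digit string, but the number of blocks preceding a given occurrence is not. Early blocks may be empty, or may meet without a descent, and then the indexing shifts.

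For example, take all columns equal to $(\frac14,\frac14,\frac14,\frac14)$ in both matrices. Set $s^{(1)}_k=4k^2$, and $s^{(2)}_1=4$, $s^{(2)}_2=1$, $s^{(2)}_k=4(k-1)^2$ for $k\geqslant3$. The first blocks agree, and the second block of the second string is empty. For $k\geqslant2$, the $(k+1)$-th block of the second string equals the $k$-th block $0^{k^2}1^{k^2}2^{k^2}3^{k^2}$ of the first. So the numbers coincide and your condition holds for $k\geqslant3$, yet $|s^{(1)}_k-s^{(2)}_k|=8k-4\to\infty$.

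Now require $[p^{(r)}_{0k}s^{(r)}_k]\geqslant1$ and $[p^{(r)}_{3k}s^{(r)}_k]\geqslant1$ for every $k\geqslant1$ and both $r$. Then the $k$-th occurrence of $30$ marks the end of the $k$-th block in both strings, and your Steps 1--3 form a complete proof. Step 3 then yields both assertions for arbitrary pairs $(\|p^{(1)}\|,(s^{(1)}_k))$ and $(\|p^{(2)}\|,(s^{(2)}_k))$. That is more than the paper's inequalities give, since they need $p^{(1)}=p^{(2)}$ or $s^{(1)}=s^{(2)}$.
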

\begin{proof}
Let $\lim\limits_{k\to\infty}|s^{(1)}_k-s^{(2)}_k|=\infty$ and $x(\|p^{(1)}\|;\|s^{(1)}_k\|)=x(\|p^{(2)}\|;\|s^{(2)}_k\|)$.
Then all $n$‑th blocks of numbers
$x(\|p^{(1)}\|;\|s^{(1)}_k\|)$ and $x(\|p^{(2)}\|;\|s^{(2)}_k\|)$ are equivalent, hence
$[p^{(1)}_{in}s^{(1)}_n]=[p^{(2)}_{in}s^{(2)}_n]$ for $n\in N$ and $i\in\{0,1,2,3\}$.
Thus $p^{(1)}_{in}|s^{(1)}_n-s^{(2)}_n|<1$ for all $n\in N$ and $i\in\{0,1,2,3\}$, which is only possible if $p^{(1)}_{in}=0$ for all $i\in\{0,1,2,3\}$ sufficiently large $n\in N$. This contradicts the condition $\sum\limits^{3}_{i=0}p^{(1)}_{in}=1.$

Now let $\overline{\lim\limits_{n\to\infty}}\sum\limits^{3}_{i=0} |p^{(1)}_{in}-p^{(2)}_{in}|>0$ and $x(\|p^{(1)}\|;\|s^{(1)}_k\|)=x(\|p^{(2)}\|;\|s^{(2)}_k\|)$. Then all $n$‑th blocks of the numbers $x(\|p^{(1)}\|;\|s^{(1)}_k\|)$ and $x(\|p^{(2)}\|;\|s^{(2)}_k\|)$ are equal, that is, $[p^{(1)}_{in}s^{(1)}_n]=[p^{(2)}_{in}s^{(2)}_n]$, $n\in N$, $i\in\{0,1,2,3\}$. Hence,
$|p^{(1)}_{in}-p^{(2)}_{in}|s^{(1)}_n<1$ for all $n\in N$, $i\in\{0,1,2,3\}$, which is only possible if $\lim\limits_{n\to\infty}|p^{(1)}_{in}-p^{(2)}_{in}|=0$. This leads to a contradiction.
\end{proof}


\begin{thebibliography}{9}
\bibitem{AlPrTor} {\it Albeverio S., Pratsiovytyi M., Torbin G.}
              Singular probability distributions and fractal properties of sets of real numbers defined by the asymptotic frequencies of their s-adic digits // Ukrainian Math. J. --- 2005. --- 57, № 9. --- P. 1361--1370.

\bibitem{AlPrTor} {\it Albeverio S., Pratsiovytyi M., Torbin G.}
              Topological and fractal properties of real numbers which are not normal // Bull. Sci. Math. --- 2005. --- 129, №8. --- P. 615--630.

\bibitem{Besic2} {\it Besicovitch A.S.}
              Sets of fractional dimension. 2: On the sum of digits of real numbers represented in the dyadic system // Math. Ann. --- 1934. --- 110, № 3. --- p. 321--330.

\bibitem{Bor1} {\it Borel~{\'E}.}
              Les probabilites denombrables et leurs applications arithmetiques. Rend. Circ. Mat. Palermo 27, 247--271 (1909).

\bibitem{Egg1} {\it Eggleston H.G.}
              The fractional dimension of a set defined by decimal properties // Quart. J. Math. --- 1949. --- Oxford Ser. 20. --- p. 31--36.

\bibitem{Ols3} {\it Olsen L.}
              Normal and non-normal points of self-similar sets and divergence points of self-similar measures // J. London Math. Soc. --- 2003. --- 2(67), №1. ---  P. 103--122.

\bibitem{Bill} {\it Billingsley~P.}
Ergodic Theory and Information. --- Moscow: Mir, 1969. --- 239~p. (in Russian)

\bibitem{Geom} {\it Pratsiovytyi M.V.}
Geometry of the Classical Binary Representation of Real Numbers. --- Kyiv: National Pedagogical Dragomanov University Publishing , 2012. --- 68~p. (in Ukrainian)

\bibitem{Pr} {\it Pratsiovytyi M.V.}
             Fractal Approach in the Study of Singular Distributions. --- Kyiv: National Pedagogical Dragomanov University, 1998. --- 296 p. (in Ukrainian)

\bibitem{My1} {\it Pratsiovytyi M.V., Klymchuk S.O.}
             Asymptotic mean of digits of the $Q_s$--representation of the fractional part of a real number and related problems of fractal geometry and fractal analysis // Scientific Journal of the National Pedagogical Dragomanov University, 2011. --- No. 12. --- PP. 186--195 (in Ukrainian)

\bibitem{My2} {\it Pratsiovytyi M.V., Klymchuk S.O.}
             Linear fractals of the Besicovitch--Eggleston type // Scientific Journal of the National Pedagogical Dragomanov University, 2012. --- Vol. 2, No. 13. --- PP. 80--92 (in Ukrainian)

\bibitem{PrTorb} {\it Pratsiovytyi M.V., Torbin G.M.}
            Superfractality of the set of numbers without frequencies of $n$--adic digits and fractal probability distributions // Ukrainian Mathematical Journal. --- 1995. --- Vol. 47, No. 7. --- P. 971--975. (in Ukrainian)

\bibitem{Torb} {\it Torbin G.M.}
            Frequency Characteristics of Normal Numbers in Different Numeration Systems // Fractal Analysis and Related Problems. --- Kyiv: Institute of Mathematics of the NAS of Ukraine -- National Pedagogical Dragomanov University. --- 1998. --- No. 1. --- P. 53--55. (in Ukrainian)

\bibitem{PrTurb} {\it Turbin A.F., Pratsiovytyi M.V.}
            Fractal Sets, Functions, and Distributions. --- Kyiv: Naukova Dumka, 1992. --- 208 p. (in Russian)

\bibitem{Fiht} {\it Fikhtengolts G.M.}
            Course of Differential and Integral Calculus, Vol. 1. --- Moscow: FIZMATLIT, 2001. --- 616 p. (in Russian)

\bibitem{Fiht1} {\it Fikhtengolts G.M.}
            Foundations of Mathematical Analysis, Vol. 1. --- Moscow: Nauka, 1968. --- 440 p. (in Russian)
\end{thebibliography}
\end{document}